\theoremstyle{theorem}
\newtheorem{theorem}{Theorem}[section]
\newtheorem{proposition}[theorem]{Proposition}
\newtheorem{definition}{Definition}[section]
\newtheorem{remark}{Remark}[section]
\numberwithin{equation}{section}
\title{A new notion of semiprime submodules} 
\author{Masood Aryapoor\\
\tiny{\textit{Division of Mathematics and Physics}}\\
\tiny{\textit{M\"{a}lardalen  University}}\\
\tiny{\textit{Hamngatan 15, 632 17, Eskilstuna, 
Sweden
}}
%\footnote{masood.aryapoor@mdh.se}
}
 \date{}
\begin{document}

 \maketitle
\begin{abstract}
\noindent
	We introduce a new concept of a semiprime submodule. We show that a submodule of a finitely generated module over a commutative ring is semiprime if and only if it is radical, that is, an intersection of prime submodules. Using our notion, we also provide a new characterization of radical submodules of finitely generated modules over commutative rings. 
\end{abstract}
%%%%%%%%%%%%%%%%%%%%%%%%%%%%%%%%%%%%%%%%%%%%%%%%%%%%%%%%%%%%%%%%
%%%%%%%%%%%%%%%%%%%%%%%%%%%%%%%%%%%%%%%%%%%%%%%%%%%%%%%%%%%%%%%%
%%%%%%%%%%%%%%%%%%%%%%%%%%%%%%%%%%%%%%%%%%%%%%%%%%%%%%%%%%%%%%%%
%%%%%%%%%%%%%%%%%%%%%%%%%%%%%%%%%%%%%%%%%%%%%%%%%%%%%%%%%%%%%%%%
\begin{section}{Introduction} 

A submodule $N$ of a  module $M$ over a commutative ring $R$ is called prime if $rm\in N$ implies $rM\subset N$ or $m\in N$  for all $r\in R$ and $m\in N$. The notion of a prime submodule effectively generalizes  the concept of a prime ideal. This notion has been extensively examined and studied by various authors, leading to significant results. See \cite{dauns1978prime} for early work on the subject in the noncommutative case.

An ideal $I$ of  $R$ is called semiprime if $r^2\in I$ implies $r\in I$ for all $r\in R$. It is a fundamental property of semiprime ideals that a proper ideal $I$ of $R$ is semiprime if and only if it is radical, that is, it is an intersection of prime ideals of $R$. Generalizing the concept of semiprime ideals to modules in an obvious way leads us to the following definition: A submodule $N$ of a left $R$-module $M$ is called semiprime if $r^2m\in N$ implies $rm\subset N$ for all $r\in R$ and $m\in N$ (see \cite[Definition 1.9]{dauns1978prime}). It is easy to see that this condition is equivalent to the condition that the ideal $(N:m)\colonequals \{r\in R\,|\, rm\subset N \}$ is a semiprime ideal for all $m\in M$ (see also \cite[Lemma 3.1]{saracc2009semiprime}). This notion of a semiprime submodule was introduced by Dauns in the noncommutative setting \cite{dauns1978prime}. Although Dauns' definition of a semiprime module lacks the aforementioned property of semiprime ideals,  conditions under which a semiprime submodule is an intersection of prime submodules are known (see, for example, \cite{man1998commutative,saracc2009semiprime}).   

In a recent article \cite{CimpriPrime}, J. Cimprič introduced a new notion of semiprimeness for submodules of finitely generated free modules over a commutative ring, demonstrating that his notion satisfies the fundamental property of semiprime ideals. Cimprič noted that "We have not found yet a satisfactory extension of this definition to
general R-modules". Our main goal is to introduce an extension of his definition (see Definition \ref{def: semiprime}). Our main result is that our  definition of semiprimeness retains the fundamental property of semiprime ideals in the case of finitely generated modules over a commutative ring (see Theorem \ref{thm:radical = semiprime}). We also present a description of the prime radical of a submodule of a finitely generated module over a commutative ring (see Theorem \ref{thm:description of semiprime}).  

Throughout this paper, all rings are unital, associative, and commutative. Furthermore, all modules are assumed to be unital. 
\end{section} 
%%%%%%%%%%%%%%%%%%%%%%%%%%%%%%%%%%%%%%%%%%%%%%%%%%%%%%%%%%%%%%%%
%%%%%%%%%%%%%%%%%%%%%%%%%%%%%%%%%%%%%%%%%%%%%%%%%%%%%%%%%%%%%%%%

%%%%%%%%%%%%%%%%%%%%%%%%%%%%%%%%%%%%%%%%%%%%%%%%%%%%%%%%%%%%%%%%
%%%%%%%%%%%%%%%%%%%%%%%%%%%%%%%%%%%%%%%%%%%%%%%%%%%%%%%%%%%%%%%%
%%%%%%%%%%%%%%%%%%%%%%%%%%%%%%%%%%%%%%%%%%%%%%%%%%%%%%%%%%%%%%%%
%%%%%%%%%%%%%%%%%%%%%%%%%%%%%%%%%%%%%%%%%%%%%%%%%%%%%%%%%%%%%%%%

\begin{section}{Semiprime submodules of a module}
	
We begin by introducing our new notion of a semiprime submodule. 
\begin{definition}\label{def: semiprime}
	Let $R$ be a commutative ring and $M$ be an $R$-module. 
	A submodule $N$ of $M$ is called \textbf{semiprime} if $m\in (N:m)M$ implies $m\in N$ for all $m\in M$. 
\end{definition}
Here, the notation $(N:m)$ stands for the ideal $\{r\in R\, |\, rm\in N\}$. In what follows, we fix a commutative ring $R$ and  drop the prefix $R$ in the term ``$R$-module". 
  	
%%%%%%%%%%%%%%%%%%%%%%%%%%%%%%%%%%%%%%%%%%%%%%%%%%%%%%%%%%%%%%%%%   
%%%%%%%%%%%%%%%%%%%%%%%%%%%%%%%%%%%%%%%%%%%%%%%%%%%%%%%%%%%%%%%%% 

\begin{subsection}{Elementary properties of semiprime submodules}
	
	  We first show that Definition \ref{def: semiprime} generalizes the definition of semiprimeness introduced in  \cite{CimpriPrime}. 
	\begin{proposition}
		A submodule $N$ of the free module $R^n$ is semiprime in the sense of  Definition \ref{def: semiprime} if and only if for  any $m=(r_1,\dots,r_n)\in R^n$ such that $r_1m,\dots,r_nm\in N$, we have $m\in N$. In particular, and ideal of $R$ is semiprime as a submodule of  $R$ if and only if it is semiprime as an ideal of $R$. 
	\end{proposition}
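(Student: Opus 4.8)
The plan is to unwind Definition \ref{def: semiprime} in the special case $M=R^n$ by computing the submodule $(N:m)M$ explicitly for a fixed element. Write $I=(N:m)$ for the ideal $\{r\in R\,|\,rm\in N\}$ attached to $m=(r_1,\dots,r_n)\in R^n$. The first step is the elementary identity $I R^n = I^n$, where $I^n\subseteq R^n$ denotes the submodule of all tuples whose entries lie in $I$: indeed $I R^n$ is spanned by the products $a e_j$ with $a\in I$ and $e_j$ a standard basis vector, and these are exactly the tuples having a single entry in $I$, whose span is $I^n$; conversely every finite sum $\sum a_i x_i$ with $a_i\in I$, $x_i\in R^n$ has each coordinate in $I$. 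Consequently $m\in (N:m)M$ if and only if $r_i\in I$ for every $i$, that is, if and only if $r_1m,\dots,r_nm\in N$.

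With this reformulation in hand the stated equivalence will be immediate: $N$ is semiprime in the sense of Definition \ref{def: semiprime} precisely when, for every $m=(r_1,\dots,r_n)\in R^n$, the containments $r_1m,\dots,r_nm\in N$ force $m\in N$, which is exactly the condition of \cite{CimpriPrime}. So the proof is essentially the single observation of the previous paragraph followed by rewriting.

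For the ``in particular'' clause I would specialize to $n=1$, so that $M=R$ and $m=r\in R$. Then $(N:r)=\{s\in R\,|\,sr\in N\}$ and $(N:r)R=(N:r)$, whence $r\in (N:r)M$ if and only if $r\cdot r=r^2\in N$. Thus semiprimeness of $N$ as a submodule of $R$ says exactly that $r^2\in N$ implies $r\in N$ for all $r\in R$, which is the definition of a semiprime ideal recalled in the introduction; this also recovers, via $n=1$, the corresponding remark in \cite{CimpriPrime}.

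I do not anticipate a genuine obstacle here. The only point requiring a moment's care is the identification $I R^n = I^n$, i.e.\ that the module-theoretic product of an ideal with a free module is computed coordinatewise, and then making sure the resulting statement matches the phrasing of semiprimeness used in \cite{CimpriPrime} verbatim.
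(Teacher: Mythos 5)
Your proof is correct and follows essentially the same route as the paper: both arguments amount to the coordinatewise computation of $(N:m)R^n$, which you package as the identity $(N:m)R^n=(N:m)^n$ while the paper verifies the two inclusions separately in the two directions of the equivalence. Your explicit treatment of the $n=1$ case for the ``in particular'' clause is a welcome detail the paper leaves implicit.
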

	\begin{proof}
		Assume that $N$ is a semiprime submodule of  $R^n$ according to  Definition \ref{def: semiprime}. Let $m=(r_1,\dots,r_n)\in R^n$ satisfy the condition  $r_1m,\dots,r_nm\in N$, i.e., $r_1,\dots,r_n\in (N:m)$. It follows that $m\in (N:m)R^n$, which implies $m\in N$. This completes the proof of the forward direction.
		
		Conversely, assume that $N$ satisfies the property given in the proposition. Let $m=(r_1,\dots,r_n)\in R^n$ satisfy $m\in (N:m)R^n$. It follows that $m=\sum_i s_im_i$ where $s_i\in (N:m)$ and $m_i\in R^n$. Since $(N:m)$ is an ideal, the identity  $m=\sum_i s_im_i$ implies $r_i\in (N:m)$ for all $i$. Therefor, $r_1m,\dots,r_nm\in N$, from which it follows that $m\in N$. This complete the proof of the reverse direction. 
	\end{proof}
	Next, we prove some elementary properties of semiprime modules, generalizing some of the basic properties of semiprime ideals. 
	\begin{proposition}\label{prop:intersection of semiprime}
		The intersection of any family of semiprime submodules of $M$ is a semiprime submodule of $M$.
	\end{proposition}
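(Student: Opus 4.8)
The plan is to take an arbitrary family $\{N_\alpha\}_{\alpha \in A}$ of semiprime submodules of $M$, set $N = \bigcap_{\alpha} N_\alpha$, and verify the defining condition of Definition \ref{def: semiprime} directly: given $m \in M$ with $m \in (N:m)M$, I must show $m \in N$, i.e. $m \in N_\alpha$ for every $\alpha$. So fix an index $\alpha$. The key observation is the inclusion of colon ideals $(N:m) \subseteq (N_\alpha : m)$, which is immediate since $rm \in N$ forces $rm \in N_\alpha$.

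From this inclusion I get $(N:m)M \subseteq (N_\alpha : m)M$, and therefore the hypothesis $m \in (N:m)M$ yields $m \in (N_\alpha : m)M$. Since $N_\alpha$ is semiprime, Definition \ref{def: semiprime} applied to $N_\alpha$ and the element $m$ gives $m \in N_\alpha$. As $\alpha$ was arbitrary, $m \in \bigcap_\alpha N_\alpha = N$, which is exactly what is needed. (If the family is empty, $N = M$, and $M$ is trivially semiprime since $(M:m)M = M \ni m$ is consistent with $m \in M$; I would mention this edge case only in passing, or simply restrict to nonempty families.)

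I do not expect a genuine obstacle here: the only thing to be careful about is the direction of the colon-ideal inclusion (a smaller submodule gives a smaller colon ideal, hence a smaller product submodule $(\cdot)M$), and the fact that the semiprimeness condition is being tested at the \emph{same} element $m$ in $N$ and in each $N_\alpha$, so no compatibility issue arises. The proof is a two-line monotonicity argument once the inclusion $(N:m)M \subseteq (N_\alpha:m)M$ is recorded.
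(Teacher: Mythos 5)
Your proof is correct and follows essentially the same route as the paper, which likewise rests on the monotonicity observation $(N:m)\subseteq (N_\alpha:m)$ for $N\subseteq N_\alpha$ and then applies the semiprimeness of each $N_\alpha$ at the same element $m$. No gaps; the paper simply leaves these two lines implicit.
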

	\begin{proof}
		The result follows easily from the observation that $(N:m)\subset (N':m)$ for all $m\in M$ and submodules $N\subset N'$ of $M$. 
	\end{proof}
	 The following result shows that every prime submodule of a module $M$ is a semiprime submodule of $M$. 
	\begin{proposition}\label{prop:prime implies semiprime}
		Let $P$ be a prime submodule of a module $M$. Then $P$ is a semiprime submodule of $M$.
	\end{proposition}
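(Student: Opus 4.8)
The plan is to prove the contrapositive: fix an arbitrary $m\in M$ with $m\notin P$, and show that $m\notin (P:m)M$; this is exactly the statement that $P$ is semiprime in the sense of Definition \ref{def: semiprime}. First I would unwind the relevant definitions. By definition, $(P:m)=\{r\in R\,|\,rm\in P\}$, and the submodule $(P:m)M$ consists of all finite sums $\sum_i r_i x_i$ with $r_i\in (P:m)$ and $x_i\in M$.

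The key step is to observe that $(P:m)M\subset P$ whenever $m\notin P$. Indeed, take any $r\in (P:m)$. Then $rm\in P$ by definition of the colon ideal, and since $P$ is prime and $m\notin P$, primeness forces $rM\subset P$; in particular $rx\in P$ for every $x\in M$. Because $P$ is a submodule it is closed under addition, so any element $\sum_i r_i x_i$ of $(P:m)M$ (with $r_i\in (P:m)$, $x_i\in M$) lies in $P$. Hence $(P:m)M\subset P$, and since $m\notin P$ we conclude $m\notin (P:m)M$, completing the argument. (Equivalently, one may argue directly: if $m\in (P:m)M$, write $m=\sum_i r_i x_i$ with $r_i m\in P$; were $m\notin P$, primeness would give $r_i M\subset P$ for each $i$, whence $m=\sum_i r_i x_i\in P$, a contradiction.)

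I do not expect any genuine obstacle here; the only thing to be careful about is correctly reading $(P:m)M$ as the set of finite sums of products $rx$ with $r\in (P:m)$, so that primeness of $P$ can be applied to each coefficient $r$ separately. The heart of the matter is the one-line reduction $(P:m)M\subset P$ for $m\notin P$, which is immediate from the definition of a prime submodule.
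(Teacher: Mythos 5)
Your proof is correct and is essentially the paper's argument in contrapositive form: where you assume $m\notin P$ and use primeness to get $(P:m)\subset (P:M)$, hence $(P:m)M\subset P$, the paper splits into the cases $(P:m)\subset (P:M)$ (giving $m\in (P:M)M\subset P$) and the existence of $r\in (P:m)\setminus (P:M)$ (giving $m\in P$ directly by primeness). The underlying use of the prime dichotomy is the same, so no substantive difference.
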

	\begin{proof}
		Assume that $m\in (P:m)M$ for some $m\in P$. We need to show that $m\in P$. If $(P:m)\subset (P:M)$, then $m\in (P:M)M\subset P $, and we are done. Suppose that $(P:m)\neq  (P:M)$. It follows that there exist $r\notin (P:M)$ such that $rm\in P$. Since $P$ is prime and $r\notin (P:M)$, we conclude that $m\in P$, completing the proof. 
	\end{proof}
		The following proposition shows that every semiprime submodule is semiprime in the sense of Definition 1.9 in \cite{dauns1978prime}. 
	\begin{proposition}\label{prop:(N:M) is radical}
		Let $N$ be a semiprime submodule of $M$. For all $r\in R$ and $m\in M$, if $r^2m\in N$, then $rm\in N$. In  other words, $(N:m)$ is a semiprime ideal for all $m\in  M$. 
	\end{proposition}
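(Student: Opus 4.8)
The plan is to feed the right element into the defining condition of Definition \ref{def: semiprime}. Suppose $r\in R$ and $m\in M$ satisfy $r^2m\in N$, and put $m'\colonequals rm$. The key observation is that $r$ lies in the ideal $(N:m')$: indeed $r\cdot m' = r^2m\in N$. Consequently $m' = rm = r\cdot m$ is an element of $(N:m')M$, since $r\in (N:m')$ and $m\in M$. Now apply the semiprimeness of $N$ to the element $m'$: from $m'\in (N:m')M$ we conclude $m'\in N$, that is, $rm\in N$. This is exactly the assertion $r^2m\in N\Rightarrow rm\in N$.

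For the reformulation in terms of ideals, recall that an ideal $I$ of $R$ is semiprime precisely when $r^2\in I$ implies $r\in I$ for all $r\in R$. Fix $m\in M$ and suppose $r^2\in (N:m)$, i.e.\ $r^2m\in N$. By the first part, $rm\in N$, which says $r\in (N:m)$. Hence $(N:m)$ is a semiprime ideal for every $m\in M$, which is the condition of \cite[Definition 1.9]{dauns1978prime}.

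I do not expect any real obstacle here: the whole content is the choice $m'=rm$, after which everything is a one-line unwinding of definitions. The only point worth stating carefully is the elementary fact that membership in $(N:m')M$ only requires exhibiting $m'$ as a \emph{single} product $r\cdot m$ with $r\in (N:m')$ and $m\in M$ (no sum is needed), which is immediate here.
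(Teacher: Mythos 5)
Your proof is correct and is essentially the paper's own argument: you observe that $r\in (N:rm)$, hence $rm\in (N:rm)M$, and apply semiprimeness to the element $rm$. The additional unwinding of the semiprime-ideal reformulation is fine and matches the intended reading of the statement.
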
 
	\begin{proof}
		If $r^2m\in N$, then $r\in (N:rm)$, implying 
		$rm\in (N:rm)M$. Since $N$ is semiprime, we conclude that $rm\in N$. 
	\end{proof}
	The next result characterizes semiprime submodules of quotient modules. 
	\begin{proposition}\label{prop:correspondence}
		Let  $M'$ be a submodule of $M$. Then the assignment $N\mapsto N/M'$ establishes a 1-1 correspondence between the set of semiprime submodules of $M$ containing $M'$ and the set of semiprime submodules of $M/M'$. 
	\end{proposition}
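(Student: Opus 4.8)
The plan is to reduce the statement to the classical correspondence theorem for modules. Writing $\pi\colon M\to M/M'$ for the quotient map, the fourth isomorphism theorem already provides a bijection $N\mapsto N/M'=\pi(N)$ from the submodules of $M$ containing $M'$ onto the submodules of $M/M'$, with inverse $\bar N\mapsto\pi^{-1}(\bar N)$. So it will suffice to show that this bijection and its inverse both preserve semiprimeness; injectivity and surjectivity are then automatic.

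The technical input I would isolate first is a short computation valid for any submodule $N$ with $M'\subseteq N\subseteq M$ and any $m\in M$ with image $\bar m=\pi(m)$: namely $(N:m)=(N/M':\bar m)$ as ideals of $R$ (because $rm\in N$ iff $r\bar m\in N/M'$, using $M'\subseteq N$), and $(N:m)(M/M')=\pi\big((N:m)M\big)=\big((N:m)M+M'\big)/M'$ (a finite sum $\sum r_j\bar x_j$ with $r_j\in(N:m)$ is exactly the $\pi$-image of $\sum r_jx_j\in(N:m)M$, and conversely). I would also record the elementary fact that $(N:m)=(N:m-y)$ whenever $y\in M'$, since then $ry\in M'\subseteq N$ for all $r$.

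For the direction ``$N/M'$ semiprime $\Rightarrow$ $N$ semiprime'' I would take $m\in M$ with $m\in(N:m)M$, apply $\pi$, and use the computation above to get $\bar m\in\pi\big((N:m)M\big)=(N/M':\bar m)(M/M')$; semiprimeness of $N/M'$ then gives $\bar m\in N/M'$, hence $m\in N$ because $M'\subseteq N$. For the converse direction I would take $\bar m\in M/M'$ with $\bar m\in(N/M':\bar m)(M/M')$, fix any lift $m$ (so that $(N/M':\bar m)=(N:m)$), and obtain $\bar m\in\pi\big((N:m)M\big)$, i.e.\ $m\in(N:m)M+M'$; writing $m=z+y$ with $z\in(N:m)M$ and $y\in M'$, the observation above gives $(N:z)=(N:m)$, so $z\in(N:z)M$, whence $z\in N$ by semiprimeness of $N$, and finally $m=z+y\in N$, i.e.\ $\bar m\in N/M'$.

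The one place I expect to need care is exactly this last step: the preimage under $\pi$ of $(N:m)(M/M')$ is $(N:m)M+M'$, not $(N:m)M$, so semiprimeness of $N$ cannot be applied to $m$ directly. The point that makes it work is that modifying $m$ by an element of $M'\subseteq N$ changes neither membership in $N$ nor the colon ideal, which is why replacing $m$ by $z=m-y$ repairs the argument.
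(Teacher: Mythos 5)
Your proof is correct and follows essentially the same route as the paper: the key step, replacing $m$ by $z=m-y$ with $y\in M'$ so that the colon ideal $(N:z)=(N:m)$ is unchanged before applying semiprimeness of $N$, is exactly the paper's argument. Your writeup is in fact more complete, since the paper only carries out the direction ``$N$ semiprime $\Rightarrow N/M'$ semiprime'' and leaves the converse and the bijection (your appeal to the lattice isomorphism theorem) to the reader.
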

	\begin{proof}
		Let $M'\subset N$ be a semiprime submodule of $M$. We need to show that $N/M'$ is a semiprime submodule of $M/M'$. Let $m+M'\subset (N/M':m+M')M/M'$. Then 
		$m\in (N:m)M+M'$ because $(N/M':m+M')=(N:m)$. It follows that $m+m'\in (N:m)M$ for some $m'\in M'$. Since $(N:m+m')=(N:m)$ and $N$ is semiprime, $m+m'$ must belong to $N$, from which it follows that $m+M'\in N/M'$.  This proves that $M'/N$ is semiprime.It is left to the reader to complete the proof.
	\end{proof}
	We remark that a similar result holds for prime submodules, the proof of which is straightforward. See also \cite [Result 1.2]{McCaslandMoore}.
	
\end{subsection}
%%%%%%%%%%%%%%%%%%%%%%%%%%%%%%%%%%%%%%%%%%%%%%%%%%%%%%%%%%%%%%%%%   
%%%%%%%%%%%%%%%%%%%%%%%%%%%%%%%%%%%%%%%%%%%%%%%%%%%%%%%%%%%%%%%%% 

\begin{subsection}{Radical submodules}
	The (prime) radical of a submodule $N$ of $M$ is defined to be the intersection of all prime submodules of $M$ that contain $N$. If there does not exist a prime submodule of $M$ that contains $N$, the radical of $N$ is defined to be $M$. 	The radical of $N$ is denoted by $\sqrt{N}$, and $N$ is called a radical submodule if $\sqrt{N}=N$. The following result generalizes Theorem 1 in \cite{CimpriPrime}.  
	\begin{theorem}\label{thm:radical = semiprime}
		Let $M$ be a finitely generated module. For any proper submodule $N$  of $M$, the radical of $N$ is the smallest semiprime submodule of $M$ that contains $N$. In particular, $N$ is semiprime if and only if it is a radical submodule. 
	\end{theorem}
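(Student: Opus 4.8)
The plan is to prove the theorem in two halves. First I would establish that every radical submodule is semiprime, which is the easy direction: by Proposition \ref{prop:prime implies semiprime} every prime submodule is semiprime, and by Proposition \ref{prop:intersection of semiprime} an arbitrary intersection of semiprime submodules is semiprime; since $\sqrt{N}$ is by definition such an intersection (when it is proper), $\sqrt{N}$ is semiprime. This already shows that $\sqrt{N}$ is \emph{a} semiprime submodule containing $N$. Combined with the reverse inclusion below, it also gives the ``in particular'' clause: if $N$ is radical then $N=\sqrt{N}$ is semiprime, and conversely if $N$ is semiprime then the minimality statement forces $N=\sqrt{N}$.

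The substance of the theorem is therefore: if $N$ is a semiprime submodule, then $N$ contains no element outside $\sqrt{N}$, i.e. $\sqrt{N}\subset N$ (the inclusion $N\subset\sqrt{N}$ is trivial). Equivalently, every semiprime submodule containing $N$ contains $\sqrt{N}$. So fix a semiprime submodule $N$ and an element $m\in M\setminus N$; I must produce a prime submodule $P$ with $N\subset P$ and $m\notin P$. The strategy is a Zorn's lemma argument: consider the set $\mathcal{S}$ of submodules $L$ with $N\subset L$, $m\notin L$, and $L$ semiprime (or perhaps: $L$ such that $m\notin (L:m')L$ appropriately — the exact closure condition to carry along the chain needs care). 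The set is nonempty ($N\in\mathcal{S}$) and one checks unions of chains stay in $\mathcal{S}$, using that the semiprime condition is ``local'' in $m'$ via the ideals $(L:m')$ and that $(N:m)$ is finitely generated-friendly since $M$ is finitely generated. Let $P$ be a maximal element of $\mathcal{S}$.

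The remaining and hardest step is to show this maximal $P$ is actually prime. Here is where finite generation of $M$ must be used essentially (as in Cimprič's Theorem 1, which this generalizes). The idea: suppose $rm'\in P$ but $m'\notin P$ and $rM\not\subset P$; one wants a contradiction with maximality by enlarging $P$ while keeping $m$ outside and keeping semiprimeness. Because $M$ is finitely generated, $(P:M)=\{r: rM\subset P\}$ is the intersection of finitely many ideals $(P:g_i)$ over generators $g_i$, and the radical of $(P:M)$ — which by Proposition \ref{prop:(N:M) is radical} applied to the quotient controls the relevant behavior — can be analyzed via prime ideals of $R$. Concretely I expect to pass to $M/P$, where $\overline{N}=0$ is semiprime, reduce to showing a semiprime submodule that is maximal with respect to excluding a fixed element is prime, and then use that over a finitely generated module the set of prime submodules is ``large enough'' — invoking the known fact that in a finitely generated module, if $rm'\in P$ with $m'\notin P$ then $r$ lies in a prime ideal $\mathfrak{p}\supset(P:M)$ and one localizes at $\mathfrak{p}$. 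The main obstacle is precisely this primeness extraction: bridging the module-theoretic semiprime condition (stated via the ideals $(P:m')$ and the submodules $(P:m')M$) to an honest prime submodule, and this is the place where Cimprič's original argument for free modules has to be adapted, most likely by choosing a prime ideal $\mathfrak{p}$ of $R$ maximal among those avoiding a suitable multiplicative set and setting $P'$ to be the preimage of a $\mathfrak{p}$-prime submodule, then checking $m\notin P'$ using the finitely generated hypothesis.
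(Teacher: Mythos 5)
Your easy direction is fine: $\sqrt{N}$ is an intersection of primes, primes are semiprime by Proposition \ref{prop:prime implies semiprime}, and intersections of semiprime submodules are semiprime by Proposition \ref{prop:intersection of semiprime}. Your chain argument is also salvageable (semiprimeness only ever involves finitely many coefficients $s_i\in(L:m')$, so it does pass to unions of chains). But the proof has a genuine gap exactly where you say the ``substance'' lies: you never prove that a submodule $P$ maximal among semiprime submodules containing $N$ and excluding $m$ is prime. Everything you offer for that step is conditional (``the exact closure condition needs care'', ``I expect to pass to $M/P$'', ``most likely by choosing a prime ideal $\mathfrak{p}$''), and the assertions you lean on are not established facts in the form you state them --- e.g.\ it is not a known lemma that $rm'\in P$, $m'\notin P$ forces $r$ into a prime ideal containing $(P:M)$ in a way that produces a prime submodule avoiding $m$, nor is it clear that localizing at such a $\mathfrak{p}$ keeps $m$ outside the resulting submodule. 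That missing step is not a routine verification: it is essentially the whole content of Cimpri\v{c}'s Theorem 1, so as written the proposal reduces the theorem to an unproved claim of comparable difficulty.

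For comparison, the paper does not attack the maximal-element-is-prime problem at all. It writes the finitely generated module as $R^n/M'$, uses Proposition \ref{prop:correspondence} to lift a semiprime submodule $N/M'$ of the quotient to a semiprime submodule $N$ of the free module $R^n$, invokes Cimpri\v{c}'s Theorem 1 to write $N=\cap_i P_i$ with $P_i$ prime in $R^n$, and then pushes the primes back down: each $P_i/M'$ is prime in $R^n/M'$ and $N/M'=\cap_i(P_i/M')$. If you want a self-contained argument along your lines, you would have to actually adapt Cimpri\v{c}'s proof for the free case (or reprove it), not merely cite the need to do so; alternatively, follow the paper's reduction, which turns the theorem into a two-line consequence of the correspondence proposition plus the known free-module case.
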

	\begin{proof}
		The first statement follows from the second statement because the intersection of any family of semiprime submodules is semiprime. To prove the second statement,  we may assume that $M= R^n/M'$ for some $n$ and a submodule $M'$ of $R^n$. Let $N/M'$ be a semiprime submodule of $M/M'$. By Proposition \ref{prop:correspondence}, $N$ is a semiprime submodule of $R^n$. By Theorem 1 in \cite{CimpriPrime}, $N$ is the intersection of a family of prime submodules $P_i$ of  $R^n$. Since each $P_i/M'$ is a prime submodule of $M/M'$ and $N/M'=\cap_i(P_i/M')$, the result follows.
	\end{proof}
	\begin{remark}
		Theorem \ref{thm:radical = semiprime} need not hold for not necessarily finitely generated modules over $R$. As an example, the reader can verify that the zero submodule of the $\mathbb{Z}$-module 
		\[
			M= \{\frac{m}{2^n}\,|\, m,n\in \mathbb{Z}\}
		\]
		is semiprime. However, the radical of the zero submodule equals the entire module since M contains no prime submodules. For more examples on modules with no prime submodules, see \cite{mccasland1997spectrum}.
	\end{remark}
	Next, we turn to the problem of describing the radical of a submodule. In \cite{McCasland1986}, McCasland and Moore introduced the notion of the ``envelope of a submodule" to give a description of the radical of a submodule. It turns out that this concept is effective only for a specific class of rings known as ``rings satisfying the radical formula" (see, for example, \cite{azizi2007radical,leung1997commutative, man1998commutative, McCaslandMoore}). In what follows, we use our notion of semiprimeness to give a more general description of the radical of a submodule of a finitely generated module. 
	
	Let $N$ be  a submodule of $M$. We define the \textit{first radical} of $N$, denoted by $N^{(1)}$, to be the submodule of $M$ generated by all $m\in M$ such that $m\in (N:m)M$. Inductively, we define the \textit{$i$-th radical} of $N$ using the formula 
	\[
		N^{(i)}:=\left(N^{(i-1)} \right)^{(1)},
	\]
	where $i\geq 2$. 
	It is clear that $N\subset N^{(1)}\subset\cdots\subset N^{(i)}\subset\cdots.$ We conclude the paper with the following result. 
	\begin{theorem}\label{thm:description of semiprime}
		For any submodule $N$ of a finitely generated module $M$, we have $\sqrt{N}=\cup_i N^{(i)}$. 
	\end{theorem}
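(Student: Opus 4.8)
The plan is to introduce $L\colonequals\bigcup_i N^{(i)}$, observe that it is a submodule of $M$ (being an increasing union of submodules) that contains $N$, and then establish the two inclusions $L\subseteq\sqrt{N}$ and $\sqrt{N}\subseteq L$ separately. The finite generation of $M$ will be used only at the very end, through Theorem \ref{thm:radical = semiprime}; the remark following that theorem shows it cannot be avoided.

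For the inclusion $L\subseteq\sqrt{N}$, I would fix a prime submodule $P$ of $M$ with $N\subseteq P$ and show by induction on $i$ that $N^{(i)}\subseteq P$ for all $i\geq 0$, where $N^{(0)}\colonequals N$. The base case is the hypothesis. For the inductive step, recall that $N^{(i)}=(N^{(i-1)})^{(1)}$ is generated by those $m\in M$ with $m\in(N^{(i-1)}:m)M$; for such $m$, the inclusion $N^{(i-1)}\subseteq P$ gives $(N^{(i-1)}:m)\subseteq(P:m)$, hence $m\in(P:m)M$, and since $P$ is semiprime by Proposition \ref{prop:prime implies semiprime}, we get $m\in P$. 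Thus every generator of $N^{(i)}$ lies in $P$, so $N^{(i)}\subseteq P$, completing the induction. Hence $L\subseteq P$ for every prime submodule $P\supseteq N$, so $L\subseteq\sqrt{N}$; if no such $P$ exists, then $\sqrt{N}=M$ and the inclusion is trivial.

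For the reverse inclusion, the crux is to prove that $L$ is a semiprime submodule. Suppose $m\in(L:m)M$ for some $m\in M$, and write $m=\sum_{j=1}^{t}s_j m_j$ with $s_j\in(L:m)$ and $m_j\in M$. Since $L=\bigcup_i N^{(i)}$ is an increasing union and the sum is finite, there is a single index $k$ with $s_j m\in N^{(k)}$ for every $j$, i.e.\ $s_j\in(N^{(k)}:m)$ for all $j$; therefore $m\in(N^{(k)}:m)M$, so $m$ is one of the generators of $(N^{(k)})^{(1)}=N^{(k+1)}$, whence $m\in N^{(k+1)}\subseteq L$. This shows $L$ is semiprime. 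If $L=M$, then $\sqrt{N}\subseteq M=L$, and together with the first part we obtain $\sqrt{N}=L$; otherwise $L$ is a \emph{proper} semiprime submodule containing the (then also proper) submodule $N$, so by Theorem \ref{thm:radical = semiprime}, since $\sqrt{N}$ is the smallest semiprime submodule containing $N$, we get $\sqrt{N}\subseteq L$. Combining the two inclusions gives $\sqrt{N}=L=\bigcup_i N^{(i)}$.

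I expect the main obstacle to be the semiprimeness of $L$: the point is to notice that membership of $m$ in $(L:m)M$ is always witnessed by \emph{finitely many} coefficients $s_1,\dots,s_t$, so that, $L$ being an increasing union, all the products $s_j m$ already lie in one term $N^{(k)}$ of the chain — which is exactly what is needed to push $m$ one step further into $N^{(k+1)}$. Once this is in place, the only external inputs are Proposition \ref{prop:prime implies semiprime} (to get $L\subseteq\sqrt N$) and Theorem \ref{thm:radical = semiprime} (to get $\sqrt N\subseteq L$), and everything else is routine bookkeeping.
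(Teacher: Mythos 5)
Your proof is correct and follows essentially the same route as the paper: show that $\bigcup_i N^{(i)}$ is contained in every prime submodule containing $N$ by an induction using the semiprimeness of primes (Proposition \ref{prop:prime implies semiprime}), and show that the union is a semiprime submodule so that Theorem \ref{thm:radical = semiprime} yields the reverse inclusion. If anything, your verification of semiprimeness is slightly cleaner than the paper's: you correctly test the defining condition with $(\bigcup_i N^{(i)}:m)$, needing only that the union is increasing, whereas the paper's write-up works with $(\bigcup_i N^{(i)}:M)$ and invokes finite generation of $M$ at that step; your explicit treatment of the cases $\bigcup_i N^{(i)}=M$ and $N$ proper is also a careful touch.
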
   
	 \begin{proof}
	 	We note that if $P$ is a prime submodule of $M$ containing $N$, then $N^{(1)}\subset P$ because $m\in N^{(1)}$ implies $m\in (N:m)M\subset (P:m)M$, which in turn implies $m\in P$ by Proposition \ref{prop:prime implies semiprime}. As a simple consequence of this observation, we see that $\cup_i N^{(i)}\subset \sqrt{N}$. To prove the reverse inclusion, we only need to show that $\cup_i N^{(i)}$ is semiprime, thanks to Theorem \ref{thm:radical = semiprime}. Let $m\in ( \cup_i N^{(i)}:M)M$. It follows that $m\in r_1M+\cdots+r_lM$ for some $r_1,\dots,r_l\in ( \cup_i N^{(i)}:M)$. Since $M$ is finitely generated, the condition $r_1,\dots,r_l\in ( \cup_i N^{(i)}:M)$ implies that there exists $N^{(n)}$ such that $r_1,\dots,r_l\in ( N^{(n)}:M)$. Therefore, we have 
	 	$m\in ( N^{(n)}:M)M$, which implies $m\in N^{(n+1)}$. It follows that $m\in \cup_i N^{(i)} $, completing the proof of the theorem. 
	 \end{proof}
\end{subsection}

%%%%%%%%%%%%%%%%%%%%%%%%%%%%%%%%%%%%%%%%%%%%%%%%%%%%%%%%%%%%%%%%%   
%%%%%%%%%%%%%%%%%%%%%%%%%%%%%%%%%%%%%%%%%%%%%%%%%%%%%%%%%%%%%%%%% 

\end{section}

\bibliographystyle{plain}
\bibliography{SPbiblan}

 \end{document}